
%
%
\documentclass[leqno,CJK]{siamltex704}
\usepackage{amssymb,amsmath,graphicx,amscd,mathrsfs}
\usepackage{color,xcolor,amsmath}
\usepackage{amsmath}
\usepackage{graphicx}
\usepackage{mathrsfs}
\usepackage{float}
\usepackage{amsfonts,amssymb}
\usepackage{dsfont}
\usepackage{pifont}
\usepackage{hyperref}
\usepackage{multirow}
\usepackage{placeins}
\numberwithin{equation}{section}
\def\3bar{{|\hspace{-.02in}|\hspace{-.02in}|}}
\def\E{{\mathcal{E}}}
\def\T{{\mathcal{T}}}

\def\dQ{{\mathbb{Q}}}

\def\b0{\boldsymbol{0}}
\def\sumT{\sum_{T\in\mathcal{T}_h}}     


\def\bn{{\mathbf{n}}}

\def\bf{{\mathbf{f}}}
\def\bq{{\mathbf{q}}}

\newtheorem{algorithm1}{Weak Galerkin Algorithm}

\def\tQ{\tilde Q}


 \newcommand{\Real}{\mathbb{R}}

 \newcommand{\trb}[1]{|\!|\!|#1|\!|\!|}
 \newcommand{\ba}{\bar a}

\allowdisplaybreaks 

\setlength{\parindent}{0.25in} \setlength{\parskip}{0.08in}
\title{A weak Galerkin finite element scheme with boundary
continuity for second-order elliptic problems} %
\author{Qilong Zhai\thanks{Department of Mathematics,
Jilin University, Changchun China} \and Xiu Ye\thanks{Department of
Mathematics, University of Arkansas at Little Rock, Little Rock, AR
72204 (xxye@ualr.edu). The research of Ye was supported in part by
National Science Foundation Grant DMS-1115097.} \and Ruishu
Wang\thanks{Department of Mathematics, Jilin University, Changchun
China} \and Ran Zhang\thanks{Department of Mathematics, Jilin
University, Changchun, China (zhangran@mail.jlu.edu.cn). The
research of Zhang was supported in part by China Natural National
Science Foundation(11271157, 11371171, 11471141), and by the Program
for New Century Excellent Talents in University of Ministry of
Education of China.} }

\begin{document}

\maketitle

\begin{abstract}
A new weak Galerkin (WG) finite element method for solving the
second-order elliptic problems on polygonal meshes by using
polynomials of boundary continuity is introduced and analyzed. The
WG method is utilizing weak functions and their weak derivatives
which can be approximated by polynomials in different combination of
polynomial spaces. Different combination gives rise to different
weak Galerkin finite element methods, which makes WG methods highly
flexible and efficient in practical computation. This paper explores
the possibility of certain combination of polynomial spaces that
minimize the degree of freedom in the numerical scheme, yet without
losing the accuracy of the numerical approximation. Error estimates
of optimal order are established for the corresponding WG
approximations in both a discrete $H^1$ norm and the standard $L^2$
norm. In addition, the paper also presents some numerical
experiments to demonstrate the power of the WG method. The numerical
results show a great promise of the robustness, reliability,
flexibility and accuracy of the WG method.
\end{abstract}

\begin{keywords} weak Galerkin finite element methods,
weak gradient, second-order elliptic equation, polygonal meshes.
\end{keywords}

\begin{AMS}
Primary, 65N30, 65N15, 65N12, 74N20; Secondary, 35B45, 35J50, 35J35
\end{AMS}

\section{Introduction}

Nowadays, finite element methods (FEMs) are widely used in almost
every field of engineering and industrial analysis. Since R. Courant
\cite{Courant43} formulated the essence of what is now called a
finite element in 1943, this method has been getting more and more
attractive with the development of computers and is now recognized
as one of the most versatile and powerful methods for approximating
the solutions of boundary-value problems, especially for problems
over complicated domains. Among the different finite elements
methods, the conforming finite element method \cite{SF73} with
continuous, piecewise polynomial approximating spaces, has long been
employed to approximate solutions for partial differential
equations. Within the past few decades, however, a number of
researchers have investigated Galerkin methods based on fully
discontinuous approximating spaces, such as the discontinuous
Galerkin (DG) methods \cite{CKS00, ABP01, ABCM00, BH00}, the
Hybridized discontinuous Galerkin (HDG) methods \cite{CGL09, KSC12},
the weak Galerkin (WG) methods \cite{WY13, WY14}, etc.

The WG method was first introduced in 2012 \cite{WY13} for the
second order elliptic problem and further developed with other
applications, such as the Stokes, Helmholtz, Maxwell, biharmonic
\cite{mwy, mwy0927, mwwy0618, mwy3818, mwyz-maxwell,
mwyz-biharmonic, WY13, WY14, ZhangBH}, etc. Its central idea is that
the shape function in the interior of each element is simply
polynomial. The weak finite element function could be totally
discontinuous across elements. The continuity is compensated by the
stabilizer through a suitable boundary integral defined on the
boundary of elements. That is, we know more information on the shape
function by sacrificing the continuity.

Comparing with the conforming FEMs, there are a lot of advantages
for the discontinuous methods, for instance, high-order accuracy,
multiphysics capability, the finite element partition can be of
polygon or polyhedral type, the weak finite element space is easy to
construct with any approximation requirement and suit for any given
stability requirement. All of this, however, comes at a price: most
notably through an increase in the total degrees of freedom (d.o.f.)
as a direct result of the decoupling of the elements. For linear
elements, this yields a doubling in the total number of degrees of
freedom compared to the continuous FEM. For WG finite element
methods, in order to reduce the d.o.f., people have made some
efforts: in \cite{MWY15, ZhangZhai15}, the possibility of optimal
combination of polynomial spaces was explored to minimize the number
of unknowns in the numerical scheme, yet without compromising the
accuracy of the numerical approximation; in \cite{ZhangZhai15}, some
hybridization of finite element methods has been introduced by
utilizing the Lagrange multiplier. The distinctive feature of the
methods in this framework is that the only globally coupled degrees
of freedom are those of an approximation of the solution defined
only on the boundaries of the elements, then the global unknowns are
the numerical traces of the field variables. Thus, one can reduce
the number of the globally coupled degrees of freedom of WG methods.

The goal of this paper is to explore the possibility of certain
combination of the polynomial spaces to reduce the number of
unknowns without compromising the rate of convergence, which
utilizes the boundary continuity for the WG finite element spaces.
This idea is motivated by Chen \cite{Chen2015} that was presented in
ICIAM 2015. Combining with the Schur complement technique, we
further eliminate the interior unknowns and produce a much reduced
system of linear equations involving only the unknowns representing
the interface variables. In fact, if we take the triangle mesh, the
d.o.f. of the new scheme is the same with that of conforming FEMs.

Next, we introduce this WG methods. For the sake of simplicity and
easy presentation of the main ideas, we restrict ourselves to the
following model problem
\begin{eqnarray}\label{problem-eq1}
-\nabla\cdot(a\nabla u)&=& f \quad\text{ in }\Omega,
\\ \label{problem-eq2}
u&=&0 \quad\text{ on }\partial\Omega,
\end{eqnarray}
where $\Omega$ is an open bounded polygonal domain in
$\mathbb{R}^2$. We assume that $f$ are given, sufficiently smooth
functions, and $a$ is a symmetric $2\times 2$ matrix-valued
function. Suppose there exists a positive number $\lambda$ such that
\begin{eqnarray*}
\xi^t a \,\xi \ge \lambda \xi^t\xi,\quad\forall \xi\in\Real^2,
\end{eqnarray*}
where $\xi$ is a column vector and $\xi^t$ is the transpose of
$\xi$.

The paper is organized as follows. In Section 2, we present some
standard notations in Sobolev spaces and preliminaries. A
weakly-defined differential operator is also introduced. The weak
Galerkin finite element scheme is developed and some properties for
the error analysis are discussed in Section 3. In Section 4, we
shall derive an error equation for the WG approximations.
Optimal-order error estimates of $H^1$ and $L^2$ for the WG finite
element approximations are also derived in this Section. The
equivalence of WG formulation and its Schur complement formulation
is proved in Section 5. In Section 6, numerical experiments are
conducted. Finally, we present some technical estimates for
quantities related to the local $L^2$ projections into various
finite element spaces and some approximation properties which are
useful in the convergence analysis in ¡°Appendix¡±.

\section{Notations and preliminaries}

In this section, we shall introduce some notations used in this paper.

We use the standard Soblev space notations. For an open set $D\in
\Real^d$, $\|\cdot\|_{s,D}$ and $(\cdot,\cdot)_{s,D}$ stand for the
$H^s(D)$ norm and inner-product, namely. We shall drop the
subscripts when $s=0$ and $D=\Omega$.

Let $\T_h$ be a partition of the domain consisting of polygons in
two dimension or polyhedra in three dimension satisfying a set of
conditions specified in \cite{WY14}. Denote by $\E_h$ the set of all
edges or at faces in $\T_h$. For every element $T \in \T_h$, we
denote by $h_T$ its diameter and mesh size $h = \max_{T\in\T_h} h_T$
for $\T_h$.

For a given partition $\T_h$, $P_k(T)$ denotes a piecewise polynomial on $\T_h$
whose degree is no more than $k$ on each $T\in\T_h$. Similarly, $P_k(e)$ denotes a piecewise polynomial on $\E_h$
whose degree is no more than $k$ on each $e\in\E_h$.

Now, we define the weak finite element space as follows:
\begin{eqnarray*}
V_h=\{(v_0,v_b): v_0|_T\in P_k(T), v_b|_e\in P_k(e), v_b \text{ is
continuous on }\E_h, v_b=0 \text{ on }\partial\Omega \}.
\end{eqnarray*}
It should be noticed that $v_b$ is single-valued on each edge, and
$v_b$ is continuous on $\E_h$, which means that $v_b$ share the same
value on each node of $\T_h$.

Similar to the definition in $\cite{WY13}$, we can define the
following weak gradient operator on $V_h$.
\begin{definition}For any $v_h\in V_h$, define the discrete weak gradient
$\nabla_w v_h|_T \in [P_{k-1}(T)]^2$ satisfying
\begin{eqnarray}\label{wgradient}
(\nabla_w v_h,\bq)_T=-(v_0,\nabla\cdot\bq)_T+\langle
v_b,\bq\cdot\bn\rangle_{\partial T}, \quad\forall \bq\in
[P_{k-1}(T)]^2,
\end{eqnarray}
where $\textbf{n}$ is the outward unit normal vector along $\partial
T$, $(\cdot, \cdot)_T$ stands for the $L^2$-inner product in
$L^2(T)$, and $\langle\cdot, \cdot\rangle_{\partial T}$ is the inner
product in $L^2(\partial T)$
\end{definition}

\section{A weak Galerkin finite element scheme}
In this section, we shall propose a WG scheme for the second-order
elliptic problem, and verify the wellposedness of the scheme.

For any $v_h, w_h\in V_h$, define the following bilinear forms
\begin{eqnarray*}
s(v_h,w_h)&=&\sumT h_T^{-1}\langle v_0-v_b,w_0-w_b\rangle_{\partial T},
\\
a_w(v_h,w_h)&=&(a\nabla_w v_h,\nabla_w w_h)+s(v_h,w_h).
\end{eqnarray*}

\begin{algorithm1}
The weak Galerkin numerical solution of problem (\ref{problem-eq1})
-(\ref{problem-eq2}) can be obtained by seeking $u_h\in V_h$ such that
\begin{eqnarray}\label{wg-scheme}
a_w(u_h,v_h)=(f,v_0)\quad\forall v_h\in V_h.
\end{eqnarray}
\end{algorithm1}

The following semi-norm can be inducted from $a_w(\cdot,\cdot)$
directly
\begin{eqnarray*}
\trb{v_h}^2=a_w(v_h,v_h),\quad\forall v_h\in V_h.
\end{eqnarray*}
We claim that $\trb{\cdot}$ defines a norm on $V_h$ indeed.

Notice that when $\trb{v_h}=0$, we have $\nabla_w v_h=0$ on each
$T\in\T_h$ and
$v_0=v_b$ on each $e\in\E_h$, and it follows that $\forall T\in\T_h$,
\begin{eqnarray*}
0&=&(\nabla_w v_h,\nabla v_0)_T
\\
&=&-(v_0,\nabla\cdot\nabla v_0)_T+\langle v_b,\nabla v_0\cdot\bn
\rangle_{\partial T}
\\
&=&(\nabla v_0,\nabla v_0)_T-\langle v_0-v_b,\nabla v_0\cdot\bn
\rangle_{\partial T}
\\
&=&\|\nabla v_0\|_T^2,
\end{eqnarray*}
which implies that $v_0$ is constant on $T$. With the condition that
$v_b=v_0$ on each $e\in\E_h$ and $v_b=0$ on $\partial\Omega$, we can
conclude that $v_h=0$, and thus $\trb{\cdot}$ defines a norm
on $V_h$. From this property we can arrive the wellposedness of WG
scheme (\ref{wg-scheme}) directly.

\begin{theorem}
The WG scheme (\ref{wg-scheme}) has a unique solution.
\end{theorem}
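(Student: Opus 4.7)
The plan is to exploit the fact that $V_h$ is a finite-dimensional space, so the WG scheme (\ref{wg-scheme}) is equivalent to a square linear system. For such a system, existence of a solution follows from uniqueness, so it suffices to verify that the homogeneous problem admits only the trivial solution.

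First I would take $f = 0$ and suppose $u_h \in V_h$ satisfies $a_w(u_h, v_h) = 0$ for all $v_h \in V_h$. Testing with $v_h = u_h$ yields $a_w(u_h, u_h) = 0$, which by the definition of the induced semi-norm gives $\trb{u_h}^2 = 0$.

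Then I would invoke the norm property of $\trb{\cdot}$ on $V_h$ that was established immediately above the theorem statement: the argument uses the weak gradient identity to force $\nabla v_0 = 0$ on each $T \in \T_h$, and then the condition $v_0 = v_b$ on each $e \in \E_h$ combined with the boundary condition $v_b = 0$ on $\partial\Omega$ to propagate the zero value across the entire partition. This conclusively forces $u_h = 0$, giving uniqueness of the solution to (\ref{wg-scheme}). Combined with the finite dimensionality of $V_h$, existence then follows automatically from standard linear algebra.

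No substantive obstacle is expected here, since the nontrivial work (verifying that $\trb{\cdot}$ is a genuine norm rather than merely a semi-norm) has already been carried out in the paragraphs preceding the theorem; the statement is essentially a one-line reduction of wellposedness to that norm property together with the square structure of the discrete system.
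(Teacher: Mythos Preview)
Your proposal is correct and follows exactly the approach the paper intends: the paper states that wellposedness follows ``directly'' from the norm property of $\trb{\cdot}$ established just above the theorem, and your argument (finite-dimensional square system plus $\trb{u_h}=0 \Rightarrow u_h=0$) is precisely the standard one-line justification of that claim.
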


Next, we shall discuss some properties for the error analysis.

Define the following $L^2$ projections operators:
\begin{eqnarray*}
&&Q_0: L^2(T)\rightarrow P_k(T),\quad \forall T\in\T_h,
\\
&&Q_b: L^2(e)\rightarrow P_k(e),\quad \forall e\in\E_h,
\\
&&\dQ_h: [L^2(T)]^2\rightarrow [P_{k-1}(T)]^2,\quad \forall T\in\T_h.
\end{eqnarray*}

It is known that on a one-dimension edge $e$, for any smooth function
$w$ we can get a polynomial interpolation of degree $k$ with $k+1$
different points. Suppose the $k+1$ interpolation points include the
two endpoints of the edge, and we have the interpolation
operator $I_b$.

Combining $Q_0$ and $I_b$, we can define
\begin{eqnarray*}
\tilde Q_h=\{Q_0,I_b\}:H^1(\Omega)\rightarrow V_h.
\end{eqnarray*}

In the rest of this paper, we denote $\bar a$ the local
$P_0(T)$ projection of $a$. Obviously, $\ba$ is symmetric and
bounded.
With these projections, it arrives at the following community
property.
\begin{lemma}\label{commu-prop}
For any $\bq\in [P_{k-1}(T)]^2$, $w\in H^1(\Omega)$,
\begin{eqnarray*}
(\nabla_w\tQ_h w,\bq)
&=&(\dQ_h \nabla w,\bq)+\sumT\langle I_b w- w,\bq\cdot\bn\rangle
_{\partial T}.
\end{eqnarray*}
\end{lemma}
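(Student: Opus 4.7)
The plan is to start from the definition of the discrete weak gradient applied to $\tilde Q_h w=\{Q_0 w,\,I_b w\}$ on each element, rewrite the volume term so that $w$ (not $Q_0 w$) appears, perform integration by parts to reintroduce $\nabla w$, and then use the defining property of the vector $L^2$-projection $\dQ_h$ to land in $[P_{k-1}(T)]^2$.

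More concretely, the first step is to evaluate, for a fixed $T\in\T_h$ and $\bq\in[P_{k-1}(T)]^2$,
\begin{equation*}
(\nabla_w \tilde Q_h w,\bq)_T = -(Q_0 w,\nabla\cdot\bq)_T + \langle I_b w,\bq\cdot\bn\rangle_{\partial T},
\end{equation*}
directly from (\ref{wgradient}). Since $\nabla\cdot\bq\in P_{k-2}(T)\subset P_k(T)$, the definition of $Q_0$ as the $L^2$ projection onto $P_k(T)$ allows me to replace $Q_0 w$ by $w$ in the volume term. I would then apply integration by parts on $T$ to get
\begin{equation*}
-(w,\nabla\cdot\bq)_T = (\nabla w,\bq)_T - \langle w,\bq\cdot\bn\rangle_{\partial T},
\end{equation*}
which is legitimate because $w\in H^1(\Omega)$ and $\bq$ is smooth on $T$.

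Combining these two steps yields on each $T$
\begin{equation*}
(\nabla_w \tilde Q_h w,\bq)_T = (\nabla w,\bq)_T + \langle I_b w - w,\bq\cdot\bn\rangle_{\partial T}.
\end{equation*}
Since $\bq\in[P_{k-1}(T)]^2$, the defining property of $\dQ_h$ as the $L^2$ projection onto $[P_{k-1}(T)]^2$ converts the first term into $(\dQ_h\nabla w,\bq)_T$. Summing over $T\in\T_h$ gives exactly the claimed identity.

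There is no real obstacle here; the argument is essentially bookkeeping with the weak-gradient definition, integration by parts, and the projection properties. The only minor point to be careful about is that the boundary term on $\partial T$ must keep the interpolant $I_b w$ (not a projection $Q_b w$), because that is precisely what the definition of $\tilde Q_h$ injects into the weak gradient formula; once this is respected, the rest follows in a few lines.
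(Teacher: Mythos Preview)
Your proposal is correct and follows essentially the same route as the paper: apply the weak-gradient definition to $\tilde Q_h w$, use that $\nabla\cdot\bq\in P_k(T)$ to replace $Q_0 w$ by $w$, integrate by parts, and invoke the definition of $\dQ_h$ before summing over $T$. The paper's only cosmetic difference is that it first adds and subtracts $\langle w,\bq\cdot\bn\rangle_{\partial T}$ in the boundary term and leaves the $Q_0$-replacement implicit, whereas you make that step explicit.
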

\begin{proof}
For any $T\in\T_h$, it follows the definition (\ref{wgradient}) and
the integration by parts that\newpage
\begin{eqnarray*}
&&(\nabla_w\tQ_h w,\bq)_T
\\
&=&-(Q_0 w,\nabla\cdot \bq)_T+\langle I_b w,\bq\cdot\bn\rangle
_{\partial T}
\\
&=&-(Q_0 w,\nabla\cdot \bq)_T+\langle w,\bq\cdot\bn\rangle
_{\partial T}+\langle I_b w- w,\bq\cdot\bn\rangle _{\partial T}
\\
&=&(\nabla w,\bq)_T+\langle I_b w- w,\bq\cdot\bn\rangle
_{\partial T}
\\
&=&(\dQ_h \nabla w,\bq)_T+\langle I_b w- w,\bq\cdot\bn\rangle
_{\partial T}.
\end{eqnarray*}
Summing over all $T\in\T_h$ and the proof is completed.
\end{proof}

\section{Error analysis}
In this section, we shall present the $H^1$ and $L^2$ errors of the
weak Galerkin numerical scheme (\ref{wg-scheme}) with optimal
orders.

Denote the error $e_h=\tQ_h u-u_h$, and we shall show the error
equation for $e_h$.
\begin{lemma}\label{error-eqn}
Suppose $u\in H^2(\Omega)$ is the solution of (\ref{problem-eq1})-
(\ref{problem-eq2}), and $u_h$ is the WG numerical solution
of (\ref{wg-scheme}).
Then for any $v_h\in V_h$,
\begin{eqnarray*}
&&a_w(\tQ_h u-u_h,v_h)
=l(u,v_h),
\end{eqnarray*}
where
\begin{eqnarray*}
l(u,v_h)
&=&(a\nabla_w \tQ_h u-\dQ_h (a\nabla u),\nabla_w v_h)
\\
&&-\sumT\langle
(\dQ_h(a\nabla u)-a\nabla u)\cdot\bn,v_0-v_b\rangle_{\partial T}
+s(\tQ_h u,v_h).
\end{eqnarray*}
\end{lemma}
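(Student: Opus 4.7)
The plan is to compute $a_w(\tQ_h u, v_h)$, subtract the WG scheme $a_w(u_h, v_h) = (f, v_0)$, and identify what remains with $l(u, v_h)$. The bridging device is the local projection $\dQ_h(a\nabla u) \in [P_{k-1}(T)]^2$, which lives in exactly the polynomial space against which the weak gradient is tested, so the defining relation (\ref{wgradient}) can be applied to it with no residual.

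First I would split $(a\nabla_w \tQ_h u, \nabla_w v_h)$ by adding and subtracting $\dQ_h(a\nabla u)$, which already produces the term $(a\nabla_w \tQ_h u - \dQ_h(a\nabla u), \nabla_w v_h)$ appearing in $l(u, v_h)$. For the remaining piece I would apply (\ref{wgradient}) with $\bq = \dQ_h(a\nabla u)$ elementwise and then integrate by parts in the resulting $-(v_0, \nabla \cdot \dQ_h(a\nabla u))_T$ term. The $L^2$-orthogonality of $\dQ_h$ against $[P_{k-1}(T)]^2$, together with $\nabla v_0 \in [P_{k-1}(T)]^2$, lets me strip the projection off the volume integral, producing the elementwise identity $(\dQ_h(a\nabla u), \nabla_w v_h)_T = (a\nabla u, \nabla v_0)_T - \langle v_0 - v_b, \dQ_h(a\nabla u)\cdot \bn\rangle_{\partial T}$.

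In parallel, I would test $-\nabla\cdot(a\nabla u) = f$ against $v_0$ elementwise and integrate by parts to get $(f, v_0)_T = (a\nabla u, \nabla v_0)_T - \langle a\nabla u\cdot \bn, v_0\rangle_{\partial T}$. Summing over $T$, I would insert $-v_b$ into the boundary integral; this is legitimate because $v_b$ is single-valued along $\E_h$, the normal flux $a\nabla u\cdot \bn$ appears with opposite signs from the two neighboring elements (using $u \in H^2$), and $v_b$ vanishes on $\partial\Omega$. Subtracting the two summed identities collapses the boundary contributions into $-\sum_T \langle (\dQ_h(a\nabla u) - a\nabla u)\cdot \bn, v_0 - v_b\rangle_{\partial T}$, and reattaching the stabilizer $s(\tQ_h u, v_h)$ yields $a_w(\tQ_h u, v_h) - (f, v_0) = l(u, v_h)$.

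The main obstacle I anticipate is this insertion of $v_b$ into the global boundary sum, which depends simultaneously on the continuity of $v_b$ built into $V_h$, on $u \in H^2$ providing a well-defined single-valued normal trace of $a\nabla u$ on interior edges, and on the homogeneous Dirichlet condition. Once that cancellation is identified, the remainder of the proof is routine integration by parts and exploitation of the projection property of $\dQ_h$.
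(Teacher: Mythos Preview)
Your proposal is correct and follows essentially the same route as the paper: both proofs hinge on testing the PDE against $v_0$ and integrating by parts, inserting $v_b$ into the boundary sum via single-valuedness, and using the weak-gradient definition with $\bq=\dQ_h(a\nabla u)$ together with $\nabla v_0\in[P_{k-1}(T)]^2$ to obtain the identity $(\dQ_h(a\nabla u),\nabla_w v_h)_T=(a\nabla u,\nabla v_0)_T-\langle v_0-v_b,\dQ_h(a\nabla u)\cdot\bn\rangle_{\partial T}$. The only difference is organizational---the paper starts from $a_w(u_h,v_h)=(f,v_0)$ and builds up to $a_w(\tQ_h u,v_h)$, whereas you start from $a_w(\tQ_h u,v_h)$ and work down---but the steps and the key cancellation are identical.
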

\begin{proof}
For any $v_h\in V_h$, it follows the integration by
parts that
\begin{eqnarray} \nonumber
a_w(u_h,v_h) &=& (f,v_0)
\\ \nonumber
&=&
(a\nabla u,\nabla v_0)-\sumT \langle a\nabla u\cdot\bn,
v_0\rangle_{\partial T}
\\ \label{err-est3}
&=&
(a\nabla u,\nabla v_0)-\sumT \langle a\nabla u\cdot\bn,
v_0-v_b\rangle_{\partial T}.
\end{eqnarray}
From the definition (\ref{wgradient}), we can obtain
\begin{eqnarray}\nonumber
&&(a\nabla u,\nabla v_0)
\\ \nonumber
&=&(\dQ_h (a\nabla u),\nabla v_0)
\\ \label{err-est12}
&=&(\dQ_h (a\nabla u),\nabla_w v_h)+\sumT\langle
\dQ_h(a\nabla u)\cdot\bn,v_0-v_b\rangle_{\partial T}
\end{eqnarray}
Substituting (\ref{err-est12}) into (\ref{err-est3}),
we get
\begin{eqnarray}\label{err-est13}
a_w(u_h,v_h)&=&(\dQ_h (a\nabla u),\nabla_w v_h)+\sumT\langle
(\dQ_h(a\nabla u)-a\nabla u)\cdot\bn,v_0-v_b\rangle_{\partial T}.
\end{eqnarray}
Notice that
\begin{eqnarray}\label{err-est14}
a_w(\tQ_hu,v_h)&=&(a\nabla_w \tQ_h u,\nabla_w v_h)+s(\tQ_h u,v_h).
\end{eqnarray}
Subtracting (\ref{err-est13}) from (\ref{err-est14}), we have
\begin{eqnarray*}
a_w(\tQ_hu-u_h,v_h)&=&(a\nabla_w \tQ_h u-\dQ_h (a\nabla u),\nabla_w v_h)
\\
&&-\sumT\langle
(\dQ_h(a\nabla u)-a\nabla u)\cdot\bn,v_0-v_b\rangle_{\partial T}
+s(\tQ_h u,v_h),
\end{eqnarray*}
which completes the proof.
\end{proof}

With the error equation and the technique tools in Appendix, we can arrive
at the estimate for the $H^1$ error.
\begin{theorem}
Suppose $u\in H_0^1(\Omega)\cap H^{k+1}(\Omega)$ is the solution of (\ref{problem-eq1})-
(\ref{problem-eq2}), and $u_h\in V_h$ is the WG numerical solution
of (\ref{wg-scheme}). The following estimate holds true,
\begin{eqnarray*}
\trb{\tQ_h u-u_h}\le C h^k\|u\|_{k+1}.
\end{eqnarray*}
\end{theorem}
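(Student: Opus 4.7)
The plan is to test the error equation of Lemma~\ref{error-eqn} against $v_h = e_h := \tQ_h u - u_h$, which gives
$$
\trb{e_h}^2 = a_w(e_h,e_h) = l(u,e_h),
$$
so it suffices to bound each of the three contributions to $l(u,e_h)$ by $Ch^k\|u\|_{k+1}\trb{e_h}$ and divide one factor of $\trb{e_h}$.

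For the first summand $T_1 := (a\nabla_w \tQ_h u - \dQ_h(a\nabla u), \nabla_w e_h)$, I would insert $\pm a\dQ_h \nabla u$ to split
$$
T_1 = \bigl(a(\nabla_w \tQ_h u - \dQ_h\nabla u), \nabla_w e_h\bigr) + \bigl(a\dQ_h\nabla u - \dQ_h(a\nabla u), \nabla_w e_h\bigr).
$$
The second piece is routine: since $\nabla_w e_h \in [P_{k-1}(T)]^2$, the $L^2$ projection $\dQ_h$ can be transferred to the first factor, reducing the piece to $(\dQ_h(a(\dQ_h\nabla u - \nabla u)), \nabla_w e_h)$, which is $O(h^k\|u\|_{k+1})\trb{e_h}$ by Cauchy--Schwarz and the Appendix estimate $\|\dQ_h\nabla u - \nabla u\| = O(h^k\|u\|_{k+1})$. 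For the first piece, since $\nabla_w \tQ_h u - \dQ_h\nabla u$ already sits in $[P_{k-1}(T)]^2$, self-adjointness of the $L^2$ projection yields
$$
\bigl(a(\nabla_w \tQ_h u - \dQ_h\nabla u), \nabla_w e_h\bigr) = \bigl(\nabla_w \tQ_h u - \dQ_h\nabla u,\; \dQ_h(a\nabla_w e_h)\bigr),
$$
and Lemma~\ref{commu-prop} applied with $\bq = \dQ_h(a\nabla_w e_h)$ converts this into the single boundary sum $\sum_T\langle I_b u - u, \dQ_h(a\nabla_w e_h)\cdot\bn\rangle_{\partial T}$. Edgewise Cauchy--Schwarz, the Appendix bound $\|I_b u - u\|_{\partial T}\le Ch_T^{k+1/2}\|u\|_{k+1,T}$, and the inverse trace inequality $\|\dQ_h(a\nabla_w e_h)\|_{\partial T}\le Ch_T^{-1/2}\|\dQ_h(a\nabla_w e_h)\|_T$ finish this piece.

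The second summand of $l(u,e_h)$ is dispatched by edgewise Cauchy--Schwarz, the Appendix flux estimate $\|\dQ_h(a\nabla u)-a\nabla u\|_{\partial T}\le Ch_T^{k-1/2}\|u\|_{k+1,T}$, and the bound $\bigl(\sum_T h_T^{-1}\|e_0-e_b\|_{\partial T}^2\bigr)^{1/2}\le\trb{e_h}$, which is built into the stabilizer $s$. The stabilizer term $s(\tQ_h u, e_h)$ is controlled by $s(\tQ_h u,\tQ_h u)^{1/2}\,\trb{e_h}$, with $s(\tQ_h u,\tQ_h u)^{1/2} = O(h^k\|u\|_{k+1})$ coming from Appendix estimates on $Q_0 u - I_b u$ along $\partial T$. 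The main obstacle is the first summand: although the non-polynomial factor $a$ prevents a direct application of Lemma~\ref{commu-prop}, transferring $\dQ_h$ onto the test side $a\nabla_w e_h$ (legal precisely because $\nabla_w\tQ_h u - \dQ_h\nabla u$ is already a $P_{k-1}$ polynomial) restores applicability of the commutativity lemma and delivers the sharp rate $h^k$ via the edge approximation of $I_b u - u$, whereas a naive splitting through the $P_0$-projection $\ba$ of $a$ would only yield $O(h)$.
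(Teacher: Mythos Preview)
Your proposal is correct and follows the same overall skeleton as the paper: test the error equation with $v_h=e_h$ and bound each of the three pieces of $l(u,e_h)$ by $Ch^k\|u\|_{k+1}\trb{e_h}$. The second and third summands are handled exactly as the paper does in Lemma~\ref{err-est4}.

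The only noteworthy difference is in your treatment of the first summand $T_1$. You view the variable coefficient $a$ as an obstruction and work around it by pushing $\dQ_h$ onto the test slot $a\nabla_w e_h$, then invoking Lemma~\ref{commu-prop} to land on the boundary integral $\sum_T\langle I_bu-u,\dQ_h(a\nabla_w e_h)\cdot\bn\rangle_{\partial T}$. This is valid, but the paper's route is more direct: it simply applies Cauchy--Schwarz and the triangle inequality
\[
a\nabla_w\tQ_h u-\dQ_h(a\nabla u)
= a\bigl(\nabla_w\tQ_h u-\dQ_h\nabla u\bigr)
+ a\bigl(\dQ_h\nabla u-\nabla u\bigr)
+ \bigl(a\nabla u-\dQ_h(a\nabla u)\bigr),
\]
bounding the first piece in $L^2$ by Lemma~\ref{Qh-dQ} (which is where Lemma~\ref{commu-prop} is used, on the $a$-free quantity $\nabla_w\tQ_h u-\dQ_h\nabla u$) and the other two by standard projection estimates. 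So the ``obstacle'' you identify is not really there: the commutativity lemma only needs to act on $\nabla_w\tQ_h u-\dQ_h\nabla u$, and the coefficient $a$ can be peeled off by boundedness before that step. Your detour through the test side works, but it is unnecessary.
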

\begin{proof}
Take $v_h=\tQ_h u-u_h$ in Lemma \ref{error-eqn}, then from Lemma
\ref{err-est4} we can obtain
\begin{eqnarray*}
\trb{\tQ_h u-u_h}^2&=&l(u,\tQ_h u-u_h)
\\
&\le& Ch^k\trb{\tQ_hu-u_h},
\end{eqnarray*}
which completes the proof.
\end{proof}

Next, we shall show the optimal $L^2$ error order using the well-known
Nistche's argument. Consider the following dual problem:
find
$\varphi\in H_0^1(\Omega)$ satisfying
\begin{eqnarray}\label{dual-eq}
-\nabla\cdot (a\nabla\varphi)=e_0.
\end{eqnarray}
Assume the dual problem (\ref{dual-eq}) has $H^2$ regularity, i.e.
\begin{eqnarray}\label{regularity-assump}
\|\varphi\|_2\le C\|e_0\|.
\end{eqnarray}
\begin{theorem}
Suppose $u\in H_0^1(\Omega)\cap H^{k+1}(\Omega)$ is the solution of (\ref{problem-eq1})-
(\ref{problem-eq2}), $u_h\in V_h$ is the WG numerical solution
of (\ref{wg-scheme}), and the dual problem (\ref{dual-eq}) satisfies
the regularity assumption (\ref{regularity-assump}). The following estimate holds true,
\begin{eqnarray*}
\|Q_0 u-u_0\|\le C h^{k+1}\|u\|_{k+1}.
\end{eqnarray*}
\end{theorem}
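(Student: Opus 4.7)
The plan is a Nitsche duality argument built on (\ref{dual-eq}). Write $e_h=\tQ_h u-u_h=(e_0,e_b)$ and let $\varphi$ solve the dual problem. First, I would test the dual equation against $e_0$ and integrate by parts element by element to get
\[
\|e_0\|^2=-(e_0,\nabla\cdot(a\nabla\varphi))=\sumT(\nabla e_0,a\nabla\varphi)_T-\sumT\langle e_0,a\nabla\varphi\cdot\bn\rangle_{\partial T}.
\]
Because $\varphi\in H^2(\Omega)$ makes $a\nabla\varphi\cdot\bn$ single-valued across interior edges, while $e_b$ is continuous on $\E_h$ and vanishes on $\partial\Omega$, the quantity $\sumT\langle e_b,a\nabla\varphi\cdot\bn\rangle_{\partial T}$ is zero, so $e_0$ can be replaced by $e_0-e_b$ in the jump sum.

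Next, using $\nabla e_0\in[P_{k-1}(T)]^2$ I would insert the projection $\dQ_h$ and convert the volume term into a weak gradient via the definition (\ref{wgradient}):
\[
(\nabla e_0,a\nabla\varphi)_T=(\nabla e_0,\dQ_h(a\nabla\varphi))_T=(\nabla_w e_h,\dQ_h(a\nabla\varphi))_T+\langle e_0-e_b,\dQ_h(a\nabla\varphi)\cdot\bn\rangle_{\partial T}.
\]
Combining this with the previous display gives
\[
\|e_0\|^2=\sumT(\nabla_w e_h,\dQ_h(a\nabla\varphi))_T-\sumT\langle e_0-e_b,(a\nabla\varphi-\dQ_h(a\nabla\varphi))\cdot\bn\rangle_{\partial T}.
\]
The first sum is almost $a_w(e_h,\tQ_h\varphi)$: by Lemma \ref{commu-prop} applied to $\varphi$ (using symmetry of $a$ and the local $P_0$ projection $\bar a$), it differs from $(\nabla_w e_h,a\nabla_w\tQ_h\varphi)$ only by an edge correction involving $I_b\varphi-\varphi$. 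Adding and subtracting the stabilizer $s(e_h,\tQ_h\varphi)$ then reassembles $a_w(e_h,\tQ_h\varphi)$, and Lemma \ref{error-eqn} applied with test function $v_h=\tQ_h\varphi$ identifies this quantity with $l(u,\tQ_h\varphi)$. The result is that $\|e_0\|^2$ is expressed as a finite sum of duality pairings between residuals derived from $u$ and residuals derived from $\varphi$.

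Finally I would bound each such residual by $Ch^{k+1}\|u\|_{k+1}\|\varphi\|_2$ using the approximation estimates in the Appendix for $Q_0$, $I_b$, and $\dQ_h$, the already established $H^1$ bound $\trb{e_h}\le Ch^k\|u\|_{k+1}$, and the usual trace/inverse inequalities; the regularity bound $\|\varphi\|_2\le C\|e_0\|$ from (\ref{regularity-assump}) then cancels one factor of $\|e_0\|$ and yields the claimed rate. The main obstacle is the bookkeeping of these boundary residuals: the identities above together with Lemma \ref{commu-prop} generate several edge integrals of the form $\langle(\text{projection defect from }\varphi),\,e_0-e_b\rangle$ and $\langle(\text{projection defect from }u),\,I_b\varphi-\varphi\rangle$, and each must gain one extra power of $h$ beyond what the $H^1$ estimate provides. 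That extra $h$ comes from pairing the superconvergence of $\dQ_h(a\nabla\varphi)$ (enabled by the $H^2$ regularity of $\varphi$) against the remaining $O(h^k)$ data from $u$, and balancing this trade-off uniformly across every residual term is where the delicate analysis lies.
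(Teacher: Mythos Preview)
Your proposal is correct and follows the same Nitsche duality route as the paper: testing (\ref{dual-eq}) with $e_0$, converting to weak gradients via (\ref{wgradient}) and the projection $\dQ_h$, reassembling $a_w(e_h,\tQ_h\varphi)$, invoking Lemma~\ref{error-eqn} with $v_h=\tQ_h\varphi$, and then estimating the residuals with the Appendix lemmas and the $H^1$ bound.

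One organizational remark: at the step where you pass from $(\nabla_w e_h,\dQ_h(a\nabla\varphi))$ to $(\nabla_w e_h,a\nabla_w\tQ_h\varphi)$ you route through Lemma~\ref{commu-prop} and the piecewise constant $\bar a$, which forces extra bookkeeping for $a-\bar a$. The paper avoids this by simply adding and subtracting $(a\nabla_w\tQ_h\varphi,\nabla_w e_h)+s(\tQ_h\varphi,e_h)$ and observing that the discrepancy is \emph{exactly} the functional $l(\varphi,e_h)$ from Lemma~\ref{error-eqn}, so that $\|e_0\|^2=l(u,\tQ_h\varphi)-l(\varphi,e_h)$. Then $l(\varphi,e_h)\le Ch\|\varphi\|_2\,\trb{e_h}$ follows in one line from Lemma~\ref{err-est4} (with $k=1$ there), and only $l(u,\tQ_h\varphi)$ requires the term-by-term analysis you describe. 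This packaging streamlines precisely the ``bookkeeping of boundary residuals'' you flagged as the main obstacle.
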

\begin{proof}
Testing equation (\ref{dual-eq}) by $e_0$ and we can obtain
\begin{eqnarray} \nonumber
&&\|e_0\|^2
\\ \nonumber
&=&
(a\nabla \varphi,\nabla e_0)-\sumT \langle \nabla \varphi\cdot\bn,
e_0\rangle_{\partial T}
\\ \nonumber
&=&
(\dQ_h(a\nabla \varphi),\nabla e_0)-\sumT \langle \nabla \varphi\cdot\bn,
e_0-e_b\rangle_{\partial T}
\\ \label{err-est5}
&=&
(\dQ_h(a\nabla \varphi),\nabla_w e_h)+\sumT \langle (\dQ_h(a\nabla \varphi)
-a\nabla\varphi)\cdot\bn,
e_0-e_b\rangle_{\partial T}.
\end{eqnarray}
Similar to the derivative of formula (\ref{err-est14}), we have
\begin{eqnarray}\label{err-est6}
a_w(\tQ_h \varphi,e_h)  &=&(a\nabla_w \tQ_h\varphi,\nabla_w
e_h)+s(\tQ_h\varphi,e_h).
\end{eqnarray}
Subtracting (\ref{err-est6}) from (\ref{err-est5}) yields
\begin{eqnarray*}
\|e_0\|^2=a_w(\tQ_h \varphi,e_h)-l(\varphi,e_h).
\end{eqnarray*}
By substituting $v_h=\tQ_h \varphi$ in Lemma \ref{error-eqn}
we get
\begin{eqnarray*}
a_w(e_h,\tQ_h \varphi)=l(u,\tQ_h \varphi),
\end{eqnarray*}
which implies
\begin{eqnarray}\label{err-est7}
\|e_0\|^2=l(u,\tQ_h \varphi)-l(\varphi,e_h).
\end{eqnarray}

From Lemma \ref{err-est4} and (\ref{regularity-assump}), we derive
that
\begin{eqnarray}\nonumber
l(\varphi,e_h)&\le& Ch\trb{e_h}\|\varphi\|_2
\\ \label{err-est10}
&\le& Ch^{k+1}\|u\|_{k+1}\|e_0\|.
\end{eqnarray}
Now we just need to estimate the following three terms of $l(u,\tQ_h
\varphi)$. As to the first term, we split it into two formulas and
estimate them separately as follows
\begin{eqnarray*}
&&(a\nabla_w \tQ_h u-\dQ_h (a\nabla u),\nabla_w \tQ_h\varphi)
\\
&=&(\nabla_w \tQ_h u-\nabla u,a\nabla_w \tQ_h\varphi)
\\
&=&(\nabla_w \tQ_h u-\nabla u,a\nabla_w \tQ_h\varphi -\dQ_h
(a\nabla_w \tQ_h\varphi))
\\
&&+(\nabla_w \tQ_h u-\nabla u,\dQ_h (a\nabla_w \tQ_h\varphi))
\\
&=&(\nabla_w \tQ_h u-\nabla u,a\nabla_w \tQ_h\varphi -\dQ_h
(a\nabla_w \tQ_h\varphi))
\\
&&+(\nabla_w \tQ_h u-\dQ_h\nabla u,\dQ_h (a\nabla_w \tQ_h\varphi)).
\end{eqnarray*}
For the first formula, from Lemma \ref{Qh-dQ} and Lemma \ref{Qh-H2}
we can obtain
\begin{eqnarray*}
&&|(\nabla_w \tQ_h u-\nabla u,a\nabla_w \tQ_h\varphi -\dQ_h
(a\nabla_w \tQ_h\varphi))|
\\
&\le&\|\nabla_w \tQ_h u-\nabla u\| \|a\nabla_w \tQ_h\varphi -\dQ_h
(a\nabla_w \tQ_h\varphi)\|
\\
&\le&Ch^{k}\|u\|_{k+1}\|\nabla_w \tQ_h\varphi\|_1
\\
&\le&Ch^{k+1}\|u\|_{k+1}\|\varphi\|_2.
\end{eqnarray*}
As to the second formula, it follows Lemmas \ref{commu-prop} and
\ref{Qh-dQ} that
\begin{eqnarray*}
&&(\nabla_w \tQ_h u-\dQ_h\nabla u,\dQ_h (a\nabla_w \tQ_h\varphi))
\\
&=&\sumT\langle I_bu-u,\dQ_h (a\nabla_w \tQ_h\varphi)\cdot\bn
\rangle_{\partial T}
\\
&=&\sumT\langle I_bu-u,\dQ_h (a\nabla_w
\tQ_h\varphi-a\nabla\varphi)\cdot\bn \rangle_{\partial T}
\\
&&+\sumT\langle I_bu-u,(\dQ_h
(a\nabla\varphi)-a\nabla\varphi)\cdot\bn \rangle_{\partial T}
\\
&\le&Ch^{k+1}\|u\|_{k+1}\|\varphi\|_2.
\end{eqnarray*}
For the second term, from Lemma \ref{err-est1} and the trace
inequality we can obtain
\begin{eqnarray*}
&&\sumT\langle
(\dQ_h(a\nabla u)-a\nabla u)\cdot\bn,Q_0\varphi-I_b\varphi\rangle_{\partial T}
\\
&\le&C\left(\sumT h_T\|\dQ_h(a\nabla u)-a\nabla u\|_{\partial T}^2\right)
^\frac12
\left(\sumT h_T^{-1}\|Q_0\varphi-\varphi\|_{\partial T}^2\right)
^\frac12
\\
&&+C\left(\sumT \|\dQ_h(a\nabla u)-a\nabla u\|_{T}^2\right)
^\frac12
\left(\sumT h_T^{-1}\|\varphi-I_b\varphi\|_{\partial T}^2\right)
^\frac12
\\
&\le& Ch^{k+1}\|u\|_{k+1}\|\varphi\|_2.
\end{eqnarray*}
Similarly, for the third term we have
\begin{eqnarray*}
&&s(\tQ_hu,\tQ_h\varphi)
\\
&=&\sumT h_T^{-1}\langle Q_0 u-I_b u,Q_0 \varphi-I_b \varphi\rangle_{\partial T}
\\
&=&\sumT h_T^{-1}\langle Q_0 u- u,Q_0 \varphi- \varphi\rangle_{\partial T}
+\sumT h_T^{-1}\langle u-I_b u,Q_0 \varphi- \varphi\rangle_{\partial T}
\\
&&+\sumT h_T^{-1}\langle Q_0 u- u,\varphi- I_b\varphi\rangle_{\partial T}
+\sumT h_T^{-1}\langle u-I_b u,\varphi- I_b\varphi\rangle_{\partial T}
\\
&\le& Ch^{k+1}\|u\|_{k+1}\|\varphi\|_2,
\end{eqnarray*}
which leads to
\begin{eqnarray}\nonumber
l(u,\tQ_h \varphi)&\le& Ch^{k+1}\|u\|_{k+1}\|\varphi\|_2
\\ \label{err-est11}
&\le& Ch^{k+1}\|u\|_{k+1}\|e_0\|.
\end{eqnarray}
By substituting (\ref{err-est10}) and (\ref{err-est11})
into (\ref{err-est7}), the proof is completed.
\end{proof}

\section{Schur Complement}
In this section, we shall show the technique in practical which can
eliminate the degree of freedoms of $v_0$, and then only the degree of freedoms
of $v_b$ are involved in the total linear system.

Let $u_h=\{u_0,u_b\}$ be the solution of the WG scheme (\ref{wg-scheme}),
then $u_h$ satisfies the following equation
\begin{eqnarray*}
a_w(u_h,v_h)=(f,v_0),\quad\forall v_h\in V_h.
\end{eqnarray*}
This equation can be rewritten as the following form
\begin{eqnarray}\label{schur-1}
a_w(u_h,v_h)&=&(f,v_0), \quad\forall v_h=\{v_0,0\}\in V_h,
\\ \label{schur-2}
a_w(u_h,v_h)&=&0      , \quad\forall v_h=\{0,v_b\}\in V_h.
\end{eqnarray}
For given $u_b$ on $\partial T$, one can solve equation
(\ref{schur-1}) for the $u_0$, and note
\begin{eqnarray}\label{schur-3}
u_0=D(u_b,f).
\end{eqnarray}
It should be noticed that $D(u_b,f)$ can be calculated
elementwisely. We can solve the local problem
\begin{eqnarray}\label{schur-6}
a_w(u_h,v_0)&=&(f,v_0), \quad\forall v_0\in P_k(T),
\end{eqnarray}
and get
\begin{eqnarray}\label{schur-5}
u_0=D_T(u_b,f).
\end{eqnarray}
Then we substitute (\ref{schur-3}) into (\ref{schur-2}) and
get the following equation
\begin{eqnarray}\label{schur-4}
a_w(\{D(u_b,f),u_b\},v_h)=0,\quad\forall v_h=\{0,v_b\}\in V_h.
\end{eqnarray}

Thus we can conclude a complement for the WG scheme as follows.

Step 1. Solve equation (\ref{schur-6}) on each element
$T\in\T_h$ and get $u_0=D(u_b,f)$.

Step 2. Solve the global system (\ref{schur-2}) for $u_b$.

Step 3. Recover $u_0$ by $u_0=D_T(u_b,f)$ on each element.

When we take the uniform triangle mesh on the unit square and set
$k=1$, the degree of freedom of different schemes are listed in
Table \ref{table-3}.
\begin{table}[!http]\label{table-3}
\centering \caption{Degree of freedom of different schemes.}
\begin{tabular}{|c|c|c|c|c|c|c|c|c|c|c|c|c|c|}
\hline
$h $    & $WG $  & $CWG$ & $WG~Schur$ & $CWG~schur$ & $CG$ \\
\hline
   1/8   & 592   & 465   & 208   & 81    & 81 \\
\hline
   1/16  & 2336  & 1825  & 800   & 289   & 289 \\
\hline
   1/32  & 9280  & 7233  & 3136  & 1089  & 1089 \\
\hline
   1/64  & 36992 & 28801 & 12416 & 4225  & 4225 \\
\hline
1/128 & 147712 & 114945 & 49408 & 16641 & 16641 \\
\hline
\end{tabular}
\end{table}

$WG$ stands for the degree of freedom in the scheme in \cite{WY2},
$CWG$ stands for scheme (\ref{wg-scheme}), $WG~Schur$ stands for the
scheme in \cite{WY2} with Schur complement, $CWG~Schur$ stands for
scheme (\ref{wg-scheme}) with Schur complement, and $CG$ stands for
the conforming Galerkin method.

\section{Numerical experiment}
In this section, we shall present some numerical results to show
the efficiency and accuracy of the WG scheme.

\textbf{Example 1} Consider the problem (\ref{problem-eq1}) -
(\ref{problem-eq2}) on the unit square $\Omega=[0,1]\times [0,1]$.
The analytic solution $u$ is set to be $u=\sin (\pi x)\sin(\pi y)$,
the right side and the Dirichlet boundary condition is computed to
match the exact solution. In this numerical experiment, the uniform
triangle mesh is employed. We can conclude from Table \ref{table-1}
that the convergence rate for $H^1$ and $L^2$ errors are of $O(h)$
and $O(h^2)$, respectively. This is coincide with the theoretical
analysis in this paper.

\begin{table}[!http]\label{table-1}
\centering \caption{k=1 uniform convergence rates.}
\begin{tabular}{|c|c|c|c|c|c|c|c|c|c|c|c|c|c|}
\hline
$h $    & $dof $  & $\trb{\tQ_h u-u_h}$ & order & $\|Q_0u-u_0\|$ & order \\
\hline
   1/8   & 4.6500e+02 & 3.8193e-01 &       & 2.6130e-02 &  \\
\hline
   1/16  & 1.8250e+03 & 1.9065e-01 & 1.0024  & 6.5871e-03 & 1.9880  \\
\hline
   1/32  & 7.2330e+03 & 9.5281e-02 & 1.0006  & 1.6503e-03 & 1.9969  \\
\hline
   1/64  & 2.8801e+04 & 4.7635e-02 & 1.0002  & 4.1281e-04 & 1.9992  \\
\hline
1/128 & 1.1495e+05 & 2.3817e-02 & 1.0000  & 1.0322e-04 & 1.9998  \\
\hline
\end{tabular}
\end{table}

\begin{table}[!http]\label{table-2}
\centering \caption{k=1 not uniform convergence rates.}
\begin{tabular}{|c|c|c|c|c|c|c|c|c|c|c|c|c|c|}
\hline
$h $    & $dof $  & $\trb{\tQ_h u-u_h}$ & order & $\|Q_0u-u_0\|$ & order \\
\hline
   1/8   & 7.5900e+02 & 2.7721e-01 &       & 9.9566e-03 &  \\
\hline
   1/16  & 3.0850e+03 & 1.3806e-01 & 1.0056  & 2.5059e-03 & 1.9903  \\
\hline
   1/32  & 1.2259e+04 & 7.0399e-02 & 0.9717  & 6.0741e-04 & 2.0446  \\
\hline
   1/64  & 4.9521e+04 & 3.5100e-02 & 1.0041  & 1.5030e-04 & 2.0149  \\
\hline
1/128 & 1.9978e+05 & 1.7449e-02 & 1.0084  & 3.6918e-05 & 2.0254  \\
\hline
\end{tabular}
\end{table}

\textbf{Example 2} Consider the problem (\ref{problem-eq1})-
(\ref{problem-eq2}) on the unit square $\Omega=[0,1]\times [0,1]$.
The analytic solution $u$ is set to be $u=x(1-x) y(1-y)$, the
right-hand side side and the Dirichlet boundary condition is
computed to match the exact solution. In this numerical experiment,
the uniform rectangle mesh is employed. We can conclude from Table
\ref{table-4} that the convergence rate for $H^1$ and $L^2$ errors
are of $O(h)$ and $O(h^2)$, respectively. This is coincide with the
theoretical results.

\begin{table}[!http]\label{table-4}
\centering \caption{k=1 uniform rectangle mesh convergence rates.}
\begin{tabular}{|c|c|c|c|c|c|c|c|c|c|c|c|c|c|}
\hline
$h $    & $dof $  & $\trb{\tQ_h u-u_h}$ & order & $\|Q_0u-u_0\|$ & order \\
\hline
   1/8   & 2.7300e+02 & 2.9292e-02 &       & 1.8766e-03 &  \\
\hline
   1/16  & 1.0570e+03 & 1.4587e-02 & 1.0059  & 4.8858e-04 & 1.9415  \\
\hline
   1/32  & 4.1610e+03 & 7.2859e-03 & 1.0015  & 1.1926e-04 & 2.0344  \\
\hline
   1/64  & 1.6513e+04 & 3.6420e-03 & 1.0004  & 3.1107e-05 & 1.9389  \\
\hline
1/128 & 6.5793e+04 & 1.8209e-03 & 1.0001  & 7.5782e-06 & 2.0373  \\
\hline
\end{tabular}
\end{table}

\section{Appendix}
In this section, we shall give some technique tools which are applied
in the error estimate.

The trace inequality and the inverse inequality for the weak
Galerkin method are proved in \cite{WY14}.
\begin{lemma}
Assume that the partition $\T_h$ satisfies the assumptions (A1), (A2),
and (A3) as specified in \cite{WY14}. Then, there exists a constant $C$
 such that for any $T \in \T_h$ and
edge/face $e \in\partial T$ , we have for any $\theta\in H^1(T)$,
\begin{eqnarray}\label{trace-ineq}
\|\theta\|_e^p\le Ch_T^{-1}(\|\theta\|_T^p+h_T^p\|\nabla\theta\|_T^p).
\end{eqnarray}
\end{lemma}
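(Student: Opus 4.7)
The plan is to reduce the estimate to a scaling argument on a reference configuration, with the assumptions (A1)--(A3) from \cite{WY14} serving precisely to make that reduction work on a polygonal element.

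First, I would recall that the assumptions (A1)--(A3) guarantee that each polygon $T\in\T_h$ admits a virtual sub-triangulation into a bounded number of simplices, each of which is shape-regular with diameters comparable to $h_T$, and in such a way that the edge/face $e\subset\partial T$ sits on the boundary of one of these sub-simplices $\tilde T\subset T$ with $\mathrm{diam}(\tilde T)\simeq h_T$. This reduces the problem to proving the trace inequality on $\tilde T$ with constants depending only on the shape-regularity.

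Next, on the sub-simplex $\tilde T$ I would apply the standard affine scaling to a fixed reference simplex $\hat T$ with reference face $\hat e$. Under the affine map $F:\hat T\to\tilde T$ with $|\det DF|\simeq h_T^d$ and $|DF|\simeq h_T$, the $L^p$-norms scale as $\|\theta\|_e^p \simeq h_T^{d-1}\|\hat\theta\|_{\hat e}^p$ and $\|\theta\|_{\tilde T}^p\simeq h_T^d\|\hat\theta\|_{\hat T}^p$, while $\|\nabla\theta\|_{\tilde T}^p\simeq h_T^{d-p}\|\hat\nabla\hat\theta\|_{\hat T}^p$. On $\hat T$ the classical trace theorem (continuity of the trace $W^{1,p}(\hat T)\to L^p(\hat e)$) yields
\begin{equation*}
\|\hat\theta\|_{\hat e}^p \le C\bigl(\|\hat\theta\|_{\hat T}^p + \|\hat\nabla\hat\theta\|_{\hat T}^p\bigr).
\end{equation*}
Transferring this back through the scaling relations and multiplying by $h_T^{1-d}$ on both sides gives exactly $\|\theta\|_e^p\le C h_T^{-1}\bigl(\|\theta\|_{\tilde T}^p+h_T^p\|\nabla\theta\|_{\tilde T}^p\bigr)$, which implies the desired inequality since $\|\theta\|_{\tilde T}\le\|\theta\|_T$ and $\|\nabla\theta\|_{\tilde T}\le\|\nabla\theta\|_T$.

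The main obstacle is not the scaling step itself, which is textbook, but justifying the geometric reduction at the beginning: one must verify that the shape-regularity hypotheses (A1)--(A3) genuinely force every boundary edge/face of $T$ to be covered by a shape-regular sub-simplex of diameter $\simeq h_T$, uniformly in $h$. Once that geometric fact is in hand (it is established in \cite{WY14}), the analytic content of the lemma is routine, and the constant $C$ depends only on $p$, the space dimension, and the shape-regularity constants, but not on $h_T$ or on the particular polygon.
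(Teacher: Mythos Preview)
The paper does not actually supply a proof of this lemma; it simply states the result and attributes the proof to \cite{WY14}. Your sketch is correct and is essentially the argument given there: the shape-regularity assumptions furnish, for each edge $e\subset\partial T$, a shape-regular simplex (or pyramid) $\tilde T\subset T$ with $e\subset\partial\tilde T$ and $\mathrm{diam}(\tilde T)\simeq h_T$, after which the inequality follows from the classical trace theorem on a reference element and affine scaling. One minor slip in your write-up: after pulling back the reference estimate you should multiply by $h_T^{d-1}$, not $h_T^{1-d}$, to recover $\|\theta\|_e^p$ on the left; the final inequality you wrote is nonetheless the correct one.
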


\begin{lemma}
Assume that the partition $\T_h$ satisfies the assumptions (A1), (A2), (A3)
and (A4) as specified in \cite{WY14}. Then, there exists a constant $C(k)$
 such that for any $T \in \T_h$, we have for any $\varphi\in P_k(T)$,
\begin{eqnarray}\label{inverse-ineq}
\|\nabla\varphi\|_T\le C(n)h_T^{-1}\|\varphi\|_T.
\end{eqnarray}
\end{lemma}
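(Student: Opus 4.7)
The plan is to reduce the estimate on the polygonal element $T$ to the classical inverse estimate on shape-regular triangles by exploiting the mesh assumptions (A1)--(A4) from \cite{WY14}. In that framework, each polygon $T \in \T_h$ admits a sub-triangulation into triangles $\{T_i\}_{i=1}^{N_T}$ with $N_T$ uniformly bounded and each $T_i$ shape-regular with diameter $h_{T_i}$ comparable to $h_T$; this is precisely what assumption (A4) is designed to guarantee.

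I would first invoke the classical inverse inequality on a single shape-regular triangle: for any $\varphi \in P_k(T_i)$,
\begin{equation*}
\|\nabla\varphi\|_{T_i}\le C(k)\,h_{T_i}^{-1}\|\varphi\|_{T_i}.
\end{equation*}
The proof is the standard affine scaling argument: pull $\varphi$ back to the reference triangle $\hat T$ via an affine map $F:\hat T\to T_i$ whose Jacobian determinant scales like $h_{T_i}^2$ and whose gradient scales like $h_{T_i}$ (here shape-regularity of $T_i$ is essential). On the finite-dimensional space $P_k(\hat T)$, the two seminorms $\|\hat\nabla\hat\varphi\|_{\hat T}$ and $\|\hat\varphi\|_{\hat T}$ are equivalent by equivalence of norms on a finite-dimensional space, with equivalence constant depending only on $k$; pushing forward produces the $h_{T_i}^{-1}$ factor.

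Since the restriction of $\varphi \in P_k(T)$ to each $T_i$ still lies in $P_k(T_i)$, I apply the above estimate on each sub-triangle and sum:
\begin{equation*}
\|\nabla\varphi\|_T^2=\sum_{i=1}^{N_T}\|\nabla\varphi\|_{T_i}^2\le C(k)^2\sum_{i=1}^{N_T}h_{T_i}^{-2}\|\varphi\|_{T_i}^2\le C(k)^2\,h_T^{-2}\|\varphi\|_T^2,
\end{equation*}
using $h_{T_i}\ge c\,h_T$ from (A4) together with the fact that the $T_i$ partition $T$. Taking square roots yields the claimed inequality.

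The main obstacle is purely at the level of the mesh assumptions: one must know that (A4) indeed supplies a shape-regular sub-triangulation with uniformly bounded cardinality $N_T$ and uniformly comparable sizes $h_{T_i}\asymp h_T$, which is the geometric content delegated to \cite{WY14}. Once that is in hand, the argument collapses to the standard scaling/equivalence-of-norms proof, and the constant $C(k)$ inherits only the polynomial-degree dependence coming from the reference-element estimate (the polygon geometry contributes no additional $k$-dependence).
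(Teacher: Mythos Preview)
Your argument is correct. Note, however, that the paper does not supply its own proof of this lemma: it simply records the statement and cites \cite{WY14}, where both the trace and inverse inequalities for these polygonal meshes are established. Your sub-triangulation plus affine-scaling argument is precisely the standard route taken in that reference, so there is no genuine methodological difference to report; you have essentially reconstructed the cited proof. The only caveat is bookkeeping: make sure that what you attribute to assumption (A4) (a shape-regular simplicial sub-partition with uniformly bounded cardinality and $h_{T_i}\asymp h_T$) matches the actual formulation in \cite{WY14}, since your entire reduction hinges on that geometric input.
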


The following estimate of interpolation error plays an
essential role in the error estimate.
\begin{lemma}\label{err-est1}
Assume that the partition $\T_h$ satisfies the assumptions (A1), (A2), (A3)
and (A4) as specified in \cite{WY14}.
For any $w\in H^{k+1}(\Omega)$, the following estimate holds true
\begin{eqnarray*}
\sumT\|w-I_b w\|^2_{\partial T}\le Ch^{2k+1}\|w\|^2_{k+1}.
\end{eqnarray*}
\end{lemma}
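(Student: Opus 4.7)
The plan is to reduce the claim to a standard scaling argument on a reference configuration, after which the Bramble--Hilbert lemma delivers the correct order. Because $I_b$ is a one-dimensional nodal interpolation built from $k+1$ points on each edge (including the two endpoints) and reproduces $P_k(e)$, the natural per-edge target is
\[
\|w-I_bw\|_e^2 \le C\,h_T^{2k+1}\,|w|_{H^{k+1}(T)}^2,
\]
and summing over the (boundedly many) edges of each $T$ and then over $T\in\T_h$ yields the global estimate.

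First I would fix $T\in\T_h$ and an edge $e\subset\partial T$. Under the polygonal-mesh assumptions (A1)--(A4) from \cite{WY14}, I can inscribe in $T$ a shape-regular triangle $T_e$ that has $e$ as one of its edges and whose diameter is comparable to $h_T$, with geometric constants uniform across the mesh. Let $\hat T$ be a fixed reference triangle with distinguished edge $\hat e$, fix the $k+1$ interpolation nodes (including endpoints) once and for all on $\hat e$, and let $F_{T_e}:\hat T\to T_e$ be the affine map sending $\hat e$ to $e$.

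The core of the argument is a Bramble--Hilbert estimate on $\hat T$. I would consider the linear operator $\hat E:H^{k+1}(\hat T)\to L^2(\hat e)$ defined by $\hat E\hat w=(\hat w-\hat I_b\hat w)|_{\hat e}$. By the Sobolev embedding $H^{k+1}(\hat T)\hookrightarrow C(\hat T)$ (valid for $k\ge 1$ in two dimensions) and the continuity of the nodal interpolation $\hat I_b$ on $C(\hat e)$, $\hat E$ is bounded. Since $\hat I_b$ is exact on $P_k(\hat e)$ and the trace on $\hat e$ of $P_k(\hat T)$ lies in $P_k(\hat e)$, $\hat E$ vanishes on $P_k(\hat T)$. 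The Bramble--Hilbert lemma then gives
\[
\|\hat w-\hat I_b\hat w\|_{L^2(\hat e)}\le C\,|\hat w|_{H^{k+1}(\hat T)}.
\]

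Finally I would pull the estimate back through $F_{T_e}$ using the standard affine scalings $|\hat w|_{H^{k+1}(\hat T)}^2\sim h_T^{2k}|w|_{H^{k+1}(T_e)}^2$ and $\|\cdot\|_{L^2(e)}^2\sim h_T\|\cdot\|_{L^2(\hat e)}^2$ to recover the per-edge bound above, and then sum. The main obstacle is not the routine Bramble--Hilbert computation but the geometric step of producing, uniformly in the mesh, the shape-regular sub-triangle $T_e\subset T$ needed to turn a polygonal element into something amenable to affine reference-mapping; this is precisely what the regularity axioms (A1)--(A4) of \cite{WY14} are designed to supply, and the argument hinges on invoking them so that the scaling constant $C$ depends only on shape-regularity and on $k$, not on $T$.
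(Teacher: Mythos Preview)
Your proposal is correct and follows essentially the same strategy as the paper: both arguments produce, via the shape-regularity assumption (A3), a shape-regular triangle having $e$ as an edge and contained in $T$, and then run a scaling/Bramble--Hilbert argument on that triangle to obtain the per-edge bound $\|w-I_bw\|_e^2\le Ch_T^{2k+1}\|w\|_{k+1,T}^2$. The only cosmetic difference is that the paper first extends $I_b$ to a two-dimensional Lagrange interpolant $I_{P_e}$ on the triangle (so that $I_b w=(I_{P_e}w)|_e$) and then applies the trace inequality to $w-I_{P_e}w$, whereas you apply Bramble--Hilbert directly to the composite trace-of-error operator on the reference triangle; the two routes are equivalent.
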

\begin{proof}
For any edge $e\in\partial T$, according to assumption (A3), there
exists a triangle $P_e$, whose base is identical to $e$ and height
is proportional to $h_T$. Then there exists a interpolation operator
$I_{Pe}$ onto $P_k(P_e)$ which contains all the interpolation points
of $I_b$. It follows the trace inequality (\ref{trace-ineq}) and the
property of interpolation operator that for any $w\in H^{k+2}(T)$,
\begin{eqnarray*}
&&\|w-I_b w\|^2_e
\\
&=&\|w-I_{Pe} w\|^2_e
\\
&\le& C(h_{P_e}^{-1}\|w-I_{Pe} w\|^2_{P_e}+h_{P_e}\|\nabla(w-I_{Pe} w)\|^2_{P_e})
\\
&\le& Ch_{P_e}^{2k+1}\|w\|_{k+1,P_e}^2
\\
&\le& Ch_T^{2k+1}\|w\|_{k+1,T}^2.
\end{eqnarray*}
Summing over all $T\in\T_h$ and the proof is completed.
\end{proof}

With Lemma \ref{err-est1}, we can describe the difference between
$\nabla_w\tQ_h w$ and $\dQ_h \nabla w$ in the following lemma.
\begin{lemma}\label{Qh-dQ}
Assume that the partition $\T_h$ satisfies the assumptions (A1), (A2), (A3)
and (A4) as specified in \cite{WY14}.
For any $w\in H^{k+1}(\Omega)$, the following estimate holds true
\begin{eqnarray*}
\|\nabla_w \tQ_h w-\dQ_h \nabla w\|\le C h^k\|w\|_{k+1}.
\end{eqnarray*}
\end{lemma}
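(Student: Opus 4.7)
The plan is to use Lemma \ref{commu-prop} as an identity, test it with the difference itself, and then bound the resulting boundary term using Lemma \ref{err-est1} together with the trace inequality on polynomials.

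First, observe that on each $T \in \T_h$ the quantity $\bq := \nabla_w \tQ_h w - \dQ_h \nabla w$ lies in $[P_{k-1}(T)]^2$, so it is an admissible test function in Lemma \ref{commu-prop}. Substituting this choice gives
\begin{eqnarray*}
\|\nabla_w \tQ_h w - \dQ_h \nabla w\|^2
&=& (\nabla_w \tQ_h w - \dQ_h \nabla w,\bq) \\
&=& \sumT \langle I_b w - w, \bq\cdot\bn\rangle_{\partial T}.
\end{eqnarray*}

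Next I would apply Cauchy--Schwarz edge-by-edge and then globally, yielding
\begin{eqnarray*}
\|\bq\|^2 \le \left(\sumT \|w - I_b w\|_{\partial T}^2\right)^{1/2}
\left(\sumT \|\bq\|_{\partial T}^2\right)^{1/2}.
\end{eqnarray*}
The first factor is controlled directly by Lemma \ref{err-est1}, contributing $C h^{k+1/2}\|w\|_{k+1}$. For the second factor, since $\bq$ is a polynomial on $T$, the trace inequality (\ref{trace-ineq}) combined with the inverse inequality (\ref{inverse-ineq}) gives $\|\bq\|_{\partial T}^2 \le C h_T^{-1}\|\bq\|_T^2$. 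Summing over $T$ yields $\left(\sumT \|\bq\|_{\partial T}^2\right)^{1/2} \le C h^{-1/2}\|\bq\|$, and inserting this bound produces
\begin{eqnarray*}
\|\bq\|^2 \le C h^{k+1/2}\|w\|_{k+1}\cdot h^{-1/2}\|\bq\| = C h^k \|w\|_{k+1}\|\bq\|.
\end{eqnarray*}
Dividing through by $\|\bq\|$ (the trivial case $\bq=0$ being immediate) yields the claimed bound $\|\nabla_w \tQ_h w - \dQ_h \nabla w\| \le C h^k \|w\|_{k+1}$.

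There is no real obstacle here: the key observation is simply that the difference $\nabla_w \tQ_h w - \dQ_h \nabla w$ is itself a valid test function in the commutativity identity, which turns the estimate into a routine duality argument. The only care needed is in the scaling: the interpolation estimate for $w - I_b w$ carries the ``extra'' factor $h^{1/2}$ coming from integration over $\partial T$, while the trace/inverse estimate for polynomials costs exactly $h^{-1/2}$, so the two powers combine cleanly to give the optimal rate $h^k$.
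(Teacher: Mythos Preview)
Your proof is correct and follows essentially the same approach as the paper: test the commutativity identity of Lemma~\ref{commu-prop} with $\bq=\nabla_w\tQ_h w-\dQ_h\nabla w$, apply Cauchy--Schwarz on the boundary term, bound the interpolation factor by Lemma~\ref{err-est1}, and control $\|\bq\|_{\partial T}$ via the trace and inverse inequalities. The only cosmetic difference is that the paper first states the identity for arbitrary $\bq\in[P_{k-1}(T)]^2$ before specializing, whereas you fix $\bq$ from the outset; the substance and the $h$-power bookkeeping are identical.
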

\begin{proof}
It follows Lemma \ref{commu-prop} that
for any $\bq\in [P_{k-1}(T)]^2$,
\begin{eqnarray*}
&&(\nabla_w\tQ_h w-\dQ_h \nabla w,\bq)
\\
&=&\sumT\langle I_b w- w,\bq\cdot\bn\rangle
_{\partial T}
\\
&\le& C\left(\sumT \|I_b w- w\|^2
_{\partial T}\right)^\frac12
\left(\sumT \|\bq\|^2
_{\partial T}\right)^\frac12.
\end{eqnarray*}
From the trace inequality and the inverse inequality,
we can obtain
\begin{eqnarray*}
&&\sumT\|\bq\|^2_{\partial T}
\\
&\le& C\sumT h_T^{-1}\|\bq\|^2_T+C\sumT
h_T\|\nabla\bq\|^2_T
\\
&\le& C\sumT h_T^{-1}\|\bq\|^2_T.
\end{eqnarray*}
Taking $\bq=\nabla_w\tQ_h w-\dQ_h \nabla w$ and
applying Lemma \ref{err-est1} yield
\begin{eqnarray*}
\|\nabla_w \tQ_h w-\dQ_h \nabla w\|^2\le C h^k\|w\|_{k+1}
\|\nabla_w \tQ_h w-\dQ_h \nabla w\|,
\end{eqnarray*}
which completes the proof.
\end{proof}
\begin{lemma}\label{Qh-H2}
Assume that the partition $\T_h$ satisfies the assumptions (A1), (A2), (A3)
and (A4) as specified in \cite{WY14}.
For any $w\in H^{2}(\Omega)$, the following estimate holds true
\begin{eqnarray*}
\|\nabla_w \tQ_h w\|_1\le C\|w\|_2.
\end{eqnarray*}
\end{lemma}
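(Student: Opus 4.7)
The plan is to combine a triangle inequality with a localized version of Lemma~\ref{Qh-dQ}. Since $\nabla_w\tQ_h w$ lives piecewise in $[P_{k-1}(T)]^2$, I interpret $\|\cdot\|_1$ as the broken $H^1$ norm $\bigl(\sum_{T\in\T_h}\|\cdot\|_{1,T}^2\bigr)^{1/2}$ and split
\[
\|\nabla_w \tQ_h w\|_1 \;\le\; \|\nabla_w \tQ_h w - \dQ_h \nabla w\|_1 \;+\; \|\dQ_h \nabla w\|_1,
\]
treating the two pieces separately.

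For $\|\dQ_h\nabla w\|_1$ I would use the local $H^1$-stability of the piecewise $L^2$-projection. Contractivity gives $\|\dQ_h\nabla w\|_T\le\|\nabla w\|_T$ immediately. For the gradient part, since constants belong to $[P_{k-1}(T)]^2$ I may subtract the element average $\overline{\nabla w}_T$; the inverse inequality (\ref{inverse-ineq}), $L^2$-contractivity of $\dQ_h$, and a Poincar\'e--Wirtinger estimate then combine to give
\[
\|\nabla\dQ_h\nabla w\|_T \;\le\; Ch_T^{-1}\|\dQ_h(\nabla w - \overline{\nabla w}_T)\|_T \;\le\; Ch_T^{-1}\|\nabla w - \overline{\nabla w}_T\|_T \;\le\; C\|w\|_{2,T}.
\]
Summing over $T$ yields $\|\dQ_h\nabla w\|_1\le C\|w\|_2$.

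For the difference $\nabla_w\tQ_h w - \dQ_h\nabla w$, which is polynomial on each $T$, the inverse inequality reduces the broken $H^1$ norm to a scaled $L^2$ norm, $\|\nabla_w\tQ_h w - \dQ_h\nabla w\|_{1,T}\le Ch_T^{-1}\|\nabla_w\tQ_h w - \dQ_h\nabla w\|_T$. I would then localize the proof of Lemma~\ref{Qh-dQ}: from Lemma~\ref{commu-prop},
\[
(\nabla_w\tQ_h w-\dQ_h\nabla w,\bq)_T=\langle I_b w-w,\bq\cdot\bn\rangle_{\partial T}\qquad\forall\,\bq\in[P_{k-1}(T)]^2.
\]
Testing with $\bq=\nabla_w\tQ_h w-\dQ_h\nabla w$, applying Cauchy--Schwarz on $\partial T$, and controlling $\|\bq\|_{\partial T}$ by $Ch_T^{-1/2}\|\bq\|_T$ via the trace inequality (\ref{trace-ineq}) followed by (\ref{inverse-ineq}) produce
\[
\|\nabla_w\tQ_h w-\dQ_h\nabla w\|_T \;\le\; Ch_T^{-1/2}\|I_b w - w\|_{\partial T}.
\]
The element-by-element bound $\|I_b w - w\|_{\partial T}\le Ch_T^{3/2}\|w\|_{2,\tilde T}$, with $\tilde T$ the auxiliary triangle $P_e$ that appears in the proof of Lemma~\ref{err-est1}, holds under mere $H^2$-regularity because $P_k(P_e)$ already contains linear polynomials. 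Hence $\|\nabla_w\tQ_h w-\dQ_h\nabla w\|_T\le Ch_T\|w\|_{2,\tilde T}$, and one power of $h_T$ cancels the $h_T^{-1}$ from the inverse inequality. Summing and combining with the first estimate finishes the proof.

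The main obstacle is the tight balancing of powers of $h_T$: to absorb the $h_T^{-1}$ from the inverse inequality I must extract one extra power of $h_T$ from the local interpolation estimate on $\partial T$. This is the reason I cannot simply invoke the global statement of Lemma~\ref{Qh-dQ}; instead I need to work with the localized boundary interpolation bound hidden inside the proof of Lemma~\ref{err-est1}, and similarly I need the local (not merely global) $H^1$-stability of $\dQ_h$ via Poincar\'e--Wirtinger.
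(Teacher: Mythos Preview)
Your argument is correct, but it takes a different route from the paper's proof. The paper splits through $\nabla w$ rather than $\dQ_h\nabla w$: it observes that for $w\in P_1(T)$ one has $\tQ_h w=w$ and $\nabla_w\tQ_h w=\nabla w$, so the map $w\mapsto\nabla_w\tQ_h w-\nabla w$ annihilates linears; a Bramble--Hilbert argument (with scaling) then gives $\|\nabla_w\tQ_h w-\nabla w\|_{1}\le Ch\|w\|_2$, and the triangle inequality with $\|\nabla w\|_1\le\|w\|_2$ finishes. Your approach instead recycles the machinery of Lemmas~\ref{commu-prop} and~\ref{err-est1} to control $\nabla_w\tQ_h w-\dQ_h\nabla w$ directly and handles $\dQ_h\nabla w$ via $H^1$-stability of the $L^2$-projection. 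The paper's argument is shorter and more conceptual, but hides the boundedness needed for Bramble--Hilbert inside a scaling-to-reference-element step that is not spelled out (and is somewhat delicate on general polygons); your argument is longer but fully explicit, stays elementwise throughout, and does not require mapping to a reference configuration.
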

\begin{proof}
When $w\in P_1(T)$, from definition \ref{wgradient} we know
$w=\tQ_h w$ and $\nabla_w \tQ_h w=\nabla w$, so that
\begin{eqnarray*}
\|\nabla_w \tQ_h w-\nabla w\|_1=0,\quad\forall w\in P_1(T).
\end{eqnarray*}
Then, from Bramble-Hilbert Theorem it follows that
\begin{eqnarray*}
\|\nabla_w \tQ_h w-\nabla w\|_1\le Ch\|w\|_2,\quad\forall
w\in H^2(T).
\end{eqnarray*}
From the triangle inequality we can obtain
\begin{eqnarray*}
&&\|\nabla_w \tQ_h w\|_1
\\
&\le&\|\nabla_w \tQ_h w-\nabla w\|_1+\|\nabla w\|_1
\\
&\le& C\|w\|_2,
\end{eqnarray*}
which completes the proof.
\end{proof}

In order to give the error estimate, we need to estimate the
remainder of the error equation in Lemma \ref{error-eqn}.

\begin{lemma}\label{err-est4}
Suppose $w\in H^{k+1}(\Omega)$ and $v_h\in V_h$, then the
following estimates hold true
\begin{eqnarray*}
(a\nabla_w \tQ_h w-\dQ_h (a\nabla w),\nabla_w v_h)
&\le& Ch^k\|w\|_{k+1}\trb{v_h},
\\
\sumT\langle
(\dQ_h(a\nabla w)-a\nabla w)\cdot\bn,v_0-v_b\rangle_{\partial T}
&\le& Ch^k\|w\|_{k+1}\trb{v_h},
\\
s(\tQ_hw,v_h)
&\le& Ch^k\|w\|_{k+1}\trb{v_h}.
\end{eqnarray*}
Moreover, we have
\begin{eqnarray*}
l(w,v_h)\le Ch^k\|w\|_{k+1}\trb{v_h}.
\end{eqnarray*}
\end{lemma}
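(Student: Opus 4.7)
The plan is to prove the three individual estimates by elementwise Cauchy--Schwarz, in each case arranging the $v_h$-dependent factor so that it is controlled by $\trb{v_h}$: either via $\|\nabla_w v_h\|\le C\trb{v_h}$ (which follows from coercivity $\trb{v_h}^2\ge \lambda\|\nabla_w v_h\|^2$) or via $(\sum_T h_T^{-1}\|v_0-v_b\|_{\partial T}^2)^{1/2}=s(v_h,v_h)^{1/2}\le \trb{v_h}$. The ``moreover'' statement then follows from the triangle inequality applied to the definition of $l$.

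For the first estimate, I would insert the piecewise constant matrix $\ba$ through the splitting
\begin{equation*}
a\nabla_w\tQ_h w - \dQ_h(a\nabla w) = a\bigl(\nabla_w\tQ_h w - \dQ_h\nabla w\bigr) + \bigl(a\dQ_h\nabla w - \dQ_h(a\nabla w)\bigr).
\end{equation*}
The first piece is bounded in $L^2$ by $Ch^k\|w\|_{k+1}$ using Lemma \ref{Qh-dQ} and boundedness of $a$. For the commutator, testing against any $\bq\in[P_{k-1}(T)]^2$ and using that $\ba$ is constant on $T$ so $(\ba(\dQ_h\nabla w-\nabla w),\bq)_T=0$ by the defining property of $\dQ_h$, one sees that the commutator is the $L^2$ projection onto $[P_{k-1}(T)]^2$ of $(a-\ba)(\dQ_h\nabla w-\nabla w)$; its norm is therefore at most $\|a-\ba\|_{L^\infty(T)}\|\dQ_h\nabla w-\nabla w\|_T\le Ch\cdot h^k\|w\|_{k+1,T}$. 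Summing in $T$ and applying Cauchy--Schwarz yields the first claim.

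For the second estimate, elementwise Cauchy--Schwarz yields
\begin{equation*}
\Bigl|\sumT\langle(\dQ_h(a\nabla w)-a\nabla w)\cdot\bn,v_0-v_b\rangle_{\partial T}\Bigr|\le \Bigl(\sumT h_T\|\dQ_h(a\nabla w)-a\nabla w\|_{\partial T}^2\Bigr)^{1/2}\trb{v_h}.
\end{equation*}
Applying the trace inequality (\ref{trace-ineq}) to $\dQ_h(a\nabla w)-a\nabla w$ and invoking standard $L^2$ projection approximation (using smoothness of $a$) bounds the remaining factor by $Ch^k\|w\|_{k+1}$. For the third estimate, expand
\begin{equation*}
s(\tQ_hw,v_h)=\sumT h_T^{-1}\langle Q_0w-I_bw,v_0-v_b\rangle_{\partial T},
\end{equation*}
use Cauchy--Schwarz to peel off $\trb{v_h}$, and split $Q_0w-I_bw=(Q_0w-w)-(I_bw-w)$. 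The first piece is controlled by the trace inequality combined with $L^2$ projection approximation on $Q_0$, while the second is exactly what Lemma \ref{err-est1} provides; both contribute at the $O(h^k\|w\|_{k+1})$ level.

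The main obstacle is the commutator $a\dQ_h\nabla w-\dQ_h(a\nabla w)$ in the first estimate: a naive triangle inequality bound loses the approximation order, so one must exploit the orthogonality enforced by $\dQ_h$ against the constant piece $\ba$ and then use smoothness of the coefficient. The remaining two estimates are routine applications of the trace and projection machinery already collected in the appendix.
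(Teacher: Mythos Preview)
Your argument is correct, and for the second and third estimates it coincides exactly with the paper's proof. For the first estimate you take a slightly different route: you split into two pieces and handle the commutator $a\dQ_h\nabla w-\dQ_h(a\nabla w)$ by freezing the coefficient at $\ba$ and using the orthogonality of $\dQ_h$. The paper instead inserts $a\nabla w$ as an additional intermediate term and uses the plain triangle inequality
\[
\|a\nabla_w\tQ_h w-\dQ_h(a\nabla w)\|\le \|a(\nabla_w\tQ_h w-\dQ_h\nabla w)\|+\|a(\dQ_h\nabla w-\nabla w)\|+\|a\nabla w-\dQ_h(a\nabla w)\|,
\]
each piece being $O(h^k\|w\|_{k+1})$ by Lemma~\ref{Qh-dQ} and standard projection approximation (the last term uses smoothness of $a$). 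So the ``obstacle'' you flag is not actually present: the naive triangle inequality does \emph{not} lose the order here, and the paper simply uses it. Your commutator argument is valid and even gains an extra power of $h$ on that piece, but the extra machinery with $\ba$ is unnecessary for the stated $O(h^k)$ bound.
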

\begin{proof}
For the first inequality, from the triangle inequality and
Lemma \ref{err-est1} it follows that
\begin{eqnarray*}
&&(a\nabla_w \tQ_h w-\dQ_h (a\nabla w),\nabla_w v_h)
\\
&\le& \|a\nabla_w \tQ_h w-\dQ_h (a\nabla w)\|\|\nabla_w v_h\|
\\
&\le& \|a\nabla_w \tQ_h w-a(\dQ_h \nabla w)\|\trb{v_h}+
 \|a(\dQ_h \nabla w)-a\nabla w\|\trb{v_h}
 \\
 &&+
  \|a\nabla w -\dQ_h (a\nabla w)\|\trb{v_h}
\\
&\le& Ch^k\|w\|_{k+1}\trb{v_h}.
\end{eqnarray*}
As the second inequality, from the properties of projection
operator and the trace inequality we can obtain
\begin{eqnarray*}
&&\sumT\langle
(\dQ_h(a\nabla w)-a\nabla w)\cdot\bn,v_0-v_b\rangle_{\partial T}
\\
&\le&C\left(\sumT h_T\|\dQ_h(a\nabla w)-a\nabla w\|_{\partial T}^2\right)
^\frac12
\left(\sumT h_T^{-1}\|v_0-v_b\|_{\partial T}^2\right)
^\frac12
\\
&\le&C\left(\sumT \|\dQ_h(a\nabla w)-a\nabla w\|_{T}^2\right)
^\frac12
\left(\sumT h_T^{-1}\|v_0-v_b\|_{\partial T}^2\right)
^\frac12
\\
&\le& Ch^k\|w\|_{k+1}\trb{v_h}.
\end{eqnarray*}
Similarly, for the last inequality we have
\begin{eqnarray*}
&&s(\tQ_hw,v_h)
\\
&=&\sumT h_T^{-1}\langle Q_0 w-I_b w,v_0-v_b\rangle_{\partial T}
\\
&=&\sumT h_T^{-1}\langle Q_0 w- w,v_0-v_b\rangle_{\partial T}
+\sumT h_T^{-1}\langle w-I_b w,v_0-v_b\rangle_{\partial T}
\\
&\le&C\left(\sumT h_T^{-1}\|Q_0 w- w\|_{\partial T}^2\right)
^\frac12
\left(\sumT h_T^{-1}\|v_0-v_b\|_{\partial T}^2\right)
^\frac12
\\
&&+C\left(\sumT h_T^{-1}\| w-I_b w\|_{\partial T}^2\right)
^\frac12
\left(\sumT h_T^{-1}\|v_0-v_b\|_{\partial T}^2\right)
^\frac12
\\
&\le& Ch^k\|w\|_{k+1}\trb{v_h}.
\end{eqnarray*}
\end{proof}

\end{document}